\newfont{\teneufm}{eufm10}
\newfont{\seveneufm}{eufm7}
\newfont{\fiveeufm}{eufm5}
\def\bbbc{{\mathchoice {\setbox0=\hbox{$\displaystyle\rm C$}\hbox{\hbox
to0pt{\kern0.4\wd0\vrule height0.9\ht0\hss}\box0}}
{\setbox0=\hbox{$\textstyle\rm C$}\hbox{\hbox
to0pt{\kern0.4\wd0\vrule height0.9\ht0\hss}\box0}}
{\setbox0=\hbox{$\scriptstyle\rm C$}\hbox{\hbox
to0pt{\kern0.4\wd0\vrule height0.9\ht0\hss}\box0}}
{\setbox0=\hbox{$\scriptscriptstyle\rm C$}\hbox{\hbox
to0pt{\kern0.4\wd0\vrule height0.9\ht0\hss}\box0}}}}
\def\bbbq{{\mathchoice {\setbox0=\hbox{$\displaystyle\rm
Q$}\hbox{\raise
0.15\ht0\hbox to0pt{\kern0.4\wd0\vrule height0.8\ht0\hss}\box0}}
{\setbox0=\hbox{$\textstyle\rm Q$}\hbox{\raise
0.15\ht0\hbox to0pt{\kern0.4\wd0\vrule height0.8\ht0\hss}\box0}}
{\setbox0=\hbox{$\scriptstyle\rm Q$}\hbox{\raise
0.15\ht0\hbox to0pt{\kern0.4\wd0\vrule height0.7\ht0\hss}\box0}}
{\setbox0=\hbox{$\scriptscriptstyle\rm Q$}\hbox{\raise
0.15\ht0\hbox to0pt{\kern0.4\wd0\vrule height0.7\ht0\hss}\box0}}}}
\def\bbbt{{\mathchoice {\setbox0=\hbox{$\displaystyle\rm
T$}\hbox{\hbox to0pt{\kern0.3\wd0\vrule height0.9\ht0\hss}\box0}}
{\setbox0=\hbox{$\textstyle\rm T$}\hbox{\hbox
to0pt{\kern0.3\wd0\vrule height0.9\ht0\hss}\box0}}
{\setbox0=\hbox{$\scriptstyle\rm T$}\hbox{\hbox
to0pt{\kern0.3\wd0\vrule height0.9\ht0\hss}\box0}}
{\setbox0=\hbox{$\scriptscriptstyle\rm T$}\hbox{\hbox
to0pt{\kern0.3\wd0\vrule height0.9\ht0\hss}\box0}}}}
\def\bbbs{{\mathchoice
{\setbox0=\hbox{$\displaystyle     \rm S$}\hbox{\raise0.5\ht0\hbox
to0pt{\kern0.35\wd0\vrule height0.45\ht0\hss}\hbox
to0pt{\kern0.55\wd0\vrule height0.5\ht0\hss}\box0}}
{\setbox0=\hbox{$\textstyle        \rm S$}\hbox{\raise0.5\ht0\hbox
to0pt{\kern0.35\wd0\vrule height0.45\ht0\hss}\hbox
to0pt{\kern0.55\wd0\vrule height0.5\ht0\hss}\box0}}
{\setbox0=\hbox{$\scriptstyle      \rm S$}\hbox{\raise0.5\ht0\hbox
to0pt{\kern0.35\wd0\vrule height0.45\ht0\hss}\raise0.05\ht0\hbox
to0pt{\kern0.5\wd0\vrule height0.45\ht0\hss}\box0}}
{\setbox0=\hbox{$\scriptscriptstyle\rm S$}\hbox{\raise0.5\ht0\hbox
to0pt{\kern0.4\wd0\vrule height0.45\ht0\hss}\raise0.05\ht0\hbox
to0pt{\kern0.55\wd0\vrule height0.45\ht0\hss}\box0}}}}
\def\bbbz{{\mathchoice {\hbox{$\sf\textstyle Z\kern-0.4em Z$}}
{\hbox{$\sf\textstyle Z\kern-0.4em Z$}}
{\hbox{$\sf\scriptstyle Z\kern-0.3em Z$}}
{\hbox{$\sf\scriptscriptstyle Z\kern-0.2em Z$}}}}
 \newtheorem{thm}{Theorem}
 \newtheorem{lem}[thm]{Lemma}
 \theoremstyle{definition}
 \theoremstyle{remark}
\def\cA{{\mathcal A}}
\def\cB{{\mathcal B}}
\def\cC{{\mathcal C}}
\def\cG{{\mathcal G}}
\def\cI{{\mathcal I}}
\def\cQ{{\mathcal Q}}
\def\cR{{\mathcal R}}
\def\cS{{\mathcal S}}
\def\cX{{\mathcal X}}
\def\({\left(}
\def\){\right)}
\def\[{\left[}
\def\]{\right]}
\def\<{\langle}
\def\>{\rangle}
\def\fl#1{\left\lfloor#1\right\rfloor}
\def\rf#1{\left\lceil#1\right\rceil}
\def\F{\mathbb{F}}
\def\ep{{\mathbf{\,e}}_p}
\begin{document}

\title[Additive Decompositions of Subgroups of Finite Fields]{Additive Decompositions of Subgroups of Finite Fields}

\author{Igor E.~Shparlinski}
\address{Department of Computing, Macquarie University, Sydney, NSW 2109, Australia}

\email{igor.shparlinski@mq.edu.au}

\begin{abstract} 
We say that a set $\cS$ is additively decomposed into two sets $\cA$ and $\cB$,  
if $\cS = \{a+b~:~a\in \cA, \ b \in \cB\}$. 
Here we study additively decompositions of multiplicative subgroups of
finite fields. In particular, we give some improvements 
and generalisations  of results of C. Dartyge and A. S{\'a}rk{\"o}zy on 
additive decompositions of quadratic residues and primitive roots 
modulo $p$. We use some new tools such the Karatsuba bound of double 
character sums and some results from additive combinatorics. 
\end{abstract}

\subjclass[2010]{11B13, 11L40}

\keywords{Additive decompositions, finite fields, 
character sums, additive combinatorics}

\maketitle

\section{Introduction}

Let $\F_q$ be the finite field of $q$ elements.
As usual, for two sets $\cA, \cB \subseteq \F_q$ we
define their sum as 
$$
\cA + \cB  = \{a+b~:~a\in \cA, \ b \in \cB\}.
$$
We say that a set  $\cS \subseteq \F_q$  is {\it additively decomposed\/}
into two sets if $\cS = \cA + \cB$. We say that an additively decomposition 
is nontrivial if 
$$
\min\{\#\cA, \# \cB\} \ge 2.
$$

S{\'a}rk{\"o}zy~\cite{Sark} has conjectured that the set $\cQ$ of quadratic 
residues modulo a prime $p$ does not have nontrivial decompositions and showed towards 
this conjecture that any nontrivial decomposition
$$
\cQ = \cA + \cB
$$
satisfies 
\begin{equation}
\label{eq:low Q}
\min\{\#\cA, \# \cB\} \ge \frac{p^{1/2}}{3\log p}
\end{equation}
and 
\begin{equation}
\label{eq:up Q}
\max\{\#\cA, \# \cB\} \le p^{1/2} \log p
\end{equation}

Furthermore, Dartyge and S{\'a}rk{\"o}zy~\cite{DaSa} have made a 
similar conjecture for the set $\cR$ of primitive roots modulo $p$ 
and given the following analogues of~\eqref{eq:low Q} and~\eqref{eq:up Q}:
\begin{equation}
\label{eq:low R}
\min\{\#\cA, \# \cB\} \ge \frac{\varphi(p-1)}{\tau(p-1)p^{1/2}\log p}
\end{equation}
and 
\begin{equation}
\label{eq:up R}
\max\{\#\cA, \# \cB\} \le \tau(p-1)p^{1/2}\log p
\end{equation}
where $\varphi(k)$ and $\tau(k)$ are the Euler function and the number 
of integer positive divisors of an integer $k \ge 1$. We also refer 
to~\cite{DaSa,Sark} for further references about set decompositions.

Here we consider  a more general question of additive decomposition
of arbitrary multiplicative subgroups $\cG \subseteq \F_q^*$.
In particular, our results for large subgroups leads to improvements
of the upper bounds~\eqref{eq:up Q}
and~\eqref{eq:up R}, which in turn imply an improvement of 
the lower bounds~\eqref{eq:low Q} and~\eqref{eq:low R}. 
These improvements are based on an application of 
a bound of  Karatsuba~\cite{Kar1} of double multiplicative character sums.
This technique work for subgroups of size or order at least $q^{1/2}$. 
For smaller subgroups in prime fields, that is, for prime $q=p$,  
we bring into this area yet another tool, 
coming from additive combinatorics. Namely, we use a result of
Garaev and Shen~\cite{GarShen}.
We also use a result of Shkredov and Vyugin~\cite{VyuShk} on the
size of the intetesection of shifts of a  multiplicative subgroup, to obtain 
an upper bound on the cardinalities $\#\cA$ and $\# \cB$ for additive decomposition
of arbitrary multiplicative subgroups $\cG \subseteq \F_p^*$.

Finally, we note that Shkredov~\cite{Shkr} has recently achieved remarkable
progress towards the conjecture of S{\'a}rk{\"o}zy~\cite{Sark}
and showed that the conjecture holds with $\cA=\cB$. That is,
$\cQ \ne \cA + \cA$ for any set $\cA \subseteq \F_p$. The method, however,
does not seem to extend to other subgroups. Shkredov~\cite{Shkr}
has also independently observed that $\log p$ 
can be removed from the bounds~\eqref{eq:low Q} and~\eqref{eq:up Q}
(which is a special case of Theorem~\ref{thm:Large Group} below). 

We recall that
the expressions $A \ll B$,  $B \gg A$ and $A=O(B)$ are each equivalent to the
statement that $|A|\le cB$ for some constant $c$. Throughout the paper, 
the implied constants in these symbols may depend on the real parameter $\varepsilon > 0$
and the integer parameter $\nu \ge 1$, and  are absolute otherwise.  

We also use the convention that for elements $\lambda, \mu \in \F_q$
and a set $\cS \subseteq \F_q$, 
$$
\lambda \cdot \cS + \mu = \{\lambda s + \mu~:~s \in \cS\},
$$
reserving, say, $2\cS$ for 
$$
2\cS = \cS+\cS.
$$

\section{Bounds of Multiplicative Character Sums}

We refer to~\cite{IwKow} for a background on multiplicative 
characters. 

First we recall the Weil bound of multiplicative character sums,
see~\cite[Theorem~11.23]{IwKow}.

\begin{lem}
\label{lem:Weil}
For any polynomial $F(X) \in \F_q[X]$ with  $D$ distinct zeros
in the algebraic closure of $\F_q$ and
which is not a perfect $d$th  and  any non-trivial
multiplicative character $\chi$ of $\F_q^*$ of order $d$, we
have
$$
\left| \sum_{x \in \F_q}\chi\(F(x)\)\right| \le (D-1) q^{1/2}.
$$
\end{lem}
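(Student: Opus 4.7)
The plan is to derive this as a consequence of Weil's Riemann Hypothesis for curves over finite fields, applied to the Kummer cover $y^d = F(x)$.

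First, I would interpret the character sum geometrically. Since $\chi$ has order $d$, for $a \in \F_q^*$ the number of $y \in \F_q$ with $y^d = a$ equals $\sum_{j=0}^{d-1} \chi^j(a)$, while for $a = 0$ exactly one such $y$ exists. With the standard convention $\chi(0) = 0$ for nontrivial $\chi$, the affine curve $C : y^d = F(x)$ therefore satisfies
$$
\#C(\F_q) = q + \sum_{j=1}^{d-1} \sum_{x \in \F_q} \chi^j(F(x)).
$$
The hypothesis that $F$ is not a perfect $d$-th power in $\F_q[X]$ is precisely what guarantees that $\F_q(x,y)/\F_q(x)$ with $y^d = F(x)$ is a proper geometrically integral extension, so the smooth projective model $\widetilde C$ of $C$ is a geometrically irreducible curve over $\F_q$ of some genus $g$.

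Next, I would invoke the Hasse-Weil bound $|\#\widetilde C(\F_q) - (q+1)| \le 2 g\, q^{1/2}$, combined with the observation that the cover $\widetilde C \to \P^1$ is ramified only above the $D$ distinct roots of $F$ and possibly $\infty$. The Riemann-Hurwitz formula, together with an accounting of the number of points lying above the branch locus, controls both $g$ and the discrepancy $\#\widetilde C(\F_q) - \#C(\F_q)$; in combination with the previous display this yields a bound of the shape $O_d(D \cdot q^{1/2})$ for each individual character sum $\sum_x \chi^j(F(x))$.

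The delicate step, and the main obstacle, is extracting the sharp constant $D - 1$ rather than a larger expression depending on $d$. The cleanest way, as in Iwaniec-Kowalski, is to avoid working with the whole Kummer cover and instead pass directly to the Artin $L$-function attached to the character $a \mapsto \chi(F(a))$ of the function field $\F_q(x)$: Weil's theorem asserts that all its reciprocal zeros have absolute value $q^{1/2}$, so the desired bound reduces to computing the degree of this $L$-function, and this degree is exactly $D - 1$ from an inspection of the local conductor exponents at the distinct roots of $F$ together with the behaviour at $\infty$. Summing the resulting geometric contributions of the zeros gives the stated inequality without any spurious dependence on $d$.
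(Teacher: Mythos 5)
The paper does not prove this lemma at all: it is quoted verbatim as the Weil bound with a pointer to~\cite[Theorem~11.23]{IwKow}, and your sketch is essentially the standard argument given in that reference (Weil's Riemann Hypothesis for the Artin $L$-function attached to $\chi\circ F$ over $\F_q(x)$, with the constant $D-1$ coming from the conductor/degree computation). So your route coincides with the paper's intended source, and no further comparison is needed.
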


We note that the following result is slightly more precise than 
a bound of  Karatsuba~\cite{Kar1} (see also~\cite[Chapter~VIII, Problem~9]{Kar2})
that applies to double character sums over arbitrary sets.

\begin{lem}
\label{lem:Kar} Let $\cA, \cB \subseteq \F_q$ be two arbitrary sets. 
For any non-trivial
multiplicative character $\chi$ of $\F_q^*$ and any
positive integer $\nu$, we have
$$
\sum_{a\in \cA} \sum_{b \in \cB} \chi(a+b) \ll 
\( \# \cA\)^{(2\nu-1)/2\nu}\( \(\# \cB\)^{1/2} q^{1/2\nu } + \# \cB q^{1/4\nu}\) . 
$$
\end{lem}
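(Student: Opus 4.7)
The plan is to apply Hölder's inequality in the variable $a$, enlarge the range of summation to all of $\F_q$, and then bound the resulting $2\nu$-th moment using the Weil estimate from Lemma~\ref{lem:Weil}.

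Denote the double sum by $S$ and write $T(a) = \sum_{b \in \cB} \chi(a+b)$. Hölder's inequality gives
$$
|S|^{2\nu} \le (\#\cA)^{2\nu-1}\sum_{a \in \cA} |T(a)|^{2\nu} \le (\#\cA)^{2\nu-1} W,
$$
where $W = \sum_{a \in \F_q} |T(a)|^{2\nu}$, after enlarging the range from $\cA$ to $\F_q$ using positivity. Expanding $|T(a)|^{2\nu} = T(a)^\nu \overline{T(a)}^\nu$ and interchanging orders of summation,
$$
W = \sum_{\vec b,\vec b' \in \cB^\nu} \sum_{a \in \F_q} \chi\!\left(\prod_{i=1}^\nu (a+b_i)\right) \overline{\chi}\!\left(\prod_{j=1}^\nu (a+b'_j)\right).
$$

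Let $d$ be the order of $\chi$, so that $\overline{\chi}(y) = \chi(y)^{d-1}$ for $y \neq 0$. Then the inner sum takes the form $\sum_{a \in \F_q} \chi(F_{\vec b, \vec b'}(a))$ with
$$
F_{\vec b,\vec b'}(X) = \prod_{i=1}^\nu (X+b_i) \cdot \prod_{j=1}^\nu (X+b'_j)^{d-1}.
$$
When $F_{\vec b,\vec b'}$ is not a perfect $d$-th power in $\F_q[X]$, Lemma~\ref{lem:Weil} yields the bound $O_\nu(q^{1/2})$ since $F_{\vec b,\vec b'}$ has at most $2\nu$ distinct roots; otherwise we use the trivial bound $q$. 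Being a $d$-th power is equivalent to the condition that, for each $c \in \F_q$, the multiplicity of $c$ in $\vec b$ is congruent modulo $d$ to its multiplicity in $\vec b'$. Decomposing each such degenerate pair into a common sub-multiset and complementary ``excess'' multisets on each side (both of whose multiplicities are individually divisible by $d$) shows that the number of degenerate pairs is $O_\nu((\#\cB)^\nu)$. Combining,
$$
W \ll_\nu (\#\cB)^\nu q + (\#\cB)^{2\nu} q^{1/2},
$$
and taking $2\nu$-th roots via the elementary inequality $(x+y)^{1/(2\nu)} \le x^{1/(2\nu)} + y^{1/(2\nu)}$ delivers the stated bound.

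The main obstacle is the uniform combinatorial estimate on degenerate pairs in the regime $d \le \nu$, where $\vec b'$ is no longer forced to be a permutation of $\vec b$ and nontrivial congruence coincidences of multiplicities occur. The excess decomposition is specifically designed to handle this case: fixing the size $k$ of the excess on each side forces $d\mid k$, the common part contributes $(\#\cB)^{\nu-k}$ choices, and each excess contributes roughly $(\#\cB)^{k/d}\cdot(\#\cB)^{k(d-1)/d} = (\#\cB)^{k/2}$ after accounting for the $d$-divisibility constraint on multiplicities, yielding $(\#\cB)^\nu$ uniformly in $d$.
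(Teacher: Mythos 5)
The paper gives no proof of this lemma; it simply cites Karatsuba~\cite{Kar1}, so there is nothing to compare against line by line. Your argument --- H\"older in $a$, completion to $\F_q$, expansion of the $2\nu$-th moment, Weil for the non-degenerate tuples, and the multiplicity-mod-$d$ count of degenerate tuples --- is the standard proof of Karatsuba's bound and is correct, including the delicate point that for small $d$ the degenerate pairs are still $O_\nu((\#\cB)^\nu)$ via the common-part/excess decomposition. One algebraic slip in your closing paragraph: $(\#\cB)^{k/d}\cdot(\#\cB)^{k(d-1)/d}=(\#\cB)^{k}$, not $(\#\cB)^{k/2}$; the count you actually need is $(\#\cB)^{\nu-k}\cdot(\#\cB)^{2k/d}\le(\#\cB)^{\nu}$ for $d\ge 2$, which your decomposition does deliver, so the conclusion stands.
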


\section{A Bound on the Intersection of Shifted Subgroups}
\label{sec:Sols}


Let us consider a multiplicative subgroup $\cG\subseteq \F_q^*$.
The following bound  for $m=1$  is due to Garcia and Voloch~\cite{GaVo}, 
see also~\cite{HBK,Kon}. 
 For a fixed $m \ge  1$ it follows instantly
from a result of Shkredov and Vyugin~\cite[Corollary~1.2]{VyuShk} by taking  $s=1$, $t= \# \cG$, $k = m-1$
and $B = \fl{(\# \cG)^{1/(2k+1)}}+1$.

\begin{lem}
\label{lem:ShkVyu} Assume that for a fixed integer $m\ge 2$ we have
a prime $p$ satisfies:
$$
p \ge 4(m-1) \# \cG\((\# \cG)^{1/(2m-1)}+ 1\) .
$$
Then for  pairwise distinct  $b_1,\ldots, b_m \in \F_p^*$ the bound
$$
\# \( \bigcap_{i=1}^m  \(\cG + b_i\) \)
\le 4m \((\# \cG)^{1/(2m-1)}+1\)^m
$$
holds. 
\end{lem}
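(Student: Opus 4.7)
The plan is to obtain this lemma as a direct instantiation of \cite[Corollary~1.2]{VyuShk}, which bounds the cardinality of the intersection of several shifted copies of a multiplicative subgroup of $\F_p^*$, provided $p$ is sufficiently large compared with $\#\cG$ and the number of shifts.

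The strategy, suggested by the hint in the excerpt, is to specialise the parameters in \cite[Corollary~1.2]{VyuShk} as $s=1$, $t=\#\cG$, $k=m-1$, and $B=\fl{(\#\cG)^{1/(2m-1)}}+1$. Under this specialisation, $k+1=m$ equals the number of shifts $\cG+b_1,\ldots,\cG+b_m$ that we intersect, so the object bounded by the cited result is exactly the one appearing in the statement.

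First I would recall \cite[Corollary~1.2]{VyuShk} in its original formulation, then perform the substitution and check the two sides in turn. On the hypothesis side, the lower bound on $p$ required by \cite[Corollary~1.2]{VyuShk}, once rewritten in terms of $m$, $\#\cG$, and $B$, must coincide with (or be implied by) our standing assumption $p\ge 4(m-1)\#\cG((\#\cG)^{1/(2m-1)}+1)$; this reduces to the inequality $B\le(\#\cG)^{1/(2m-1)}+1$. On the conclusion side, the bound supplied by \cite{VyuShk} evaluates to a constant multiple of $B^{k+1}=B^{m}$, and replacing $B$ by $(\#\cG)^{1/(2m-1)}+1$ together with the explicit numerical constant from the Shkredov--Vyugin estimate yields at most $4m((\#\cG)^{1/(2m-1)}+1)^{m}$, as claimed.

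The main obstacle, and really the only step requiring care, is bookkeeping: tracing through the floor function in the definition of $B$, checking that the numerical constants on both sides of \cite[Corollary~1.2]{VyuShk} line up with those stated here, and ensuring that no edge case (for instance small values of $m$, or the Garcia--Voloch regime recovered at $m=2$) is accidentally excluded by the threshold condition on $p$. No new mathematical ideas are introduced beyond this parameter substitution; all the substance resides in the cited result.
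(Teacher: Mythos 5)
Your proposal follows exactly the route the paper takes: the lemma is deduced by specialising \cite[Corollary~1.2]{VyuShk} with $s=1$, $t=\#\cG$, $k=m-1$ and $B=\fl{(\#\cG)^{1/(2m-1)}}+1$ (note $2k+1=2m-1$, so the exponents agree), and the paper likewise leaves the remaining verification as parameter bookkeeping. This matches the paper's own one-line derivation, so there is nothing further to add.
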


%

%



%

\section{A Result from Additive Combinatorics}

We extend in a natural way our definition of the sums set $\cA+\cB$ to
other operations on sets. 
For example, for $\cA\subseteq \F_q$ we have
$$
\cA^2 =  \{a_1a_2~:~a_1,a_2\in \cA\}
$$
and
$$
\cA(\cA + 1) = \{a_1(a_2+1)~:~a_1,a_2\in \cA\}.
$$







We also need the following combination of two results of 
 Garaev and Shen~\cite[Theorems~1 and~2]{GarShen}.

\begin{lem}
\label{lem:A(A+1)}
For any $\varepsilon>0$ there exists some $\delta>0$ 
such that for any prime $p$ and  set $\cA\subseteq \F_p$ of size 
$ \# \cA\le p^{1-\varepsilon}$ 
we have
$$
 \#\(\cA(\cA + 1)\) \gg (\# \cA)^{57/56 + o(1)}.
$$
\end{lem}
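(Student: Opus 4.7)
The plan is to bound $\#(\cA(\cA+1))$ from below by controlling the multiplicative energy between $\cA$ and $\cA+1$, and then to feed that into a sum--product estimate that exploits the additive shift by $1$. Define the representation function
$$
r(x) = \#\{(a,b)\in \cA\times \cA ~:~ a(b+1) = x\}, \qquad x\in \F_p,
$$
which is supported on $\cA(\cA+1)$ and satisfies $\sum_x r(x) = (\#\cA)^2$. Cauchy--Schwarz then gives
$$
(\#\cA)^4 \le \#(\cA(\cA+1)) \cdot \sum_x r(x)^2 = \#(\cA(\cA+1)) \cdot E,
$$
where $E = \#\{(a_1,a_2,b_1,b_2) \in \cA^4~:~a_1(b_1+1) = a_2(b_2+1)\}$ is the multiplicative energy of the pair $(\cA,\cA+1)$. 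It therefore suffices to prove $E \ll (\#\cA)^{3-1/56+o(1)}$.

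If $E$ were close to the trivial upper bound $(\#\cA)^3$, a multiplicative Balog--Szemer\'edi--Gowers theorem would supply large subsets $\cA_1,\cA_2 \subseteq \cA$, each of size $(\#\cA)^{1-o(1)}$, such that the product set $\cA_1 \cdot (\cA_2+1)$ has near-minimal size $(\#\cA)^{1+o(1)}$. The Pl\"unnecke--Ruzsa inequality then propagates this small multiplicative doubling to both $\cA_1$ and $\cA_2+1$ individually, producing a set $\cA_1 \subseteq \cA$ together with a translate $\cA_2+1 \subseteq \cA+1$ that are essentially multiplicatively structured while differing by the fixed additive shift.

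The final step, and the main obstacle, is to exploit the tension between additive and multiplicative structure in $\F_p$: for $\#\cA \le p^{1-\varepsilon}$, a set $\cA_1$ and a translate containing $\cA_2+1$ cannot both lie close to a geometric progression, and quantitative sum--product estimates over $\F_p$ (in the style of Bourgain--Glibichuk--Konyagin and later refinements) produce a power saving ruling this out. Tracking the precise numerology through the BSG refinement, the Pl\"unnecke--Ruzsa bound, and the sum--product input yields the exponent $1/56$; the delicate point, which is presumably where the two theorems of Garaev and Shen are combined, is the optimisation of the BSG parameter, with one theorem covering the regime where $E$ is only moderately large (so that standard multiplicative Pl\"unnecke suffices) and the other handling the regime of very large $E$ (requiring a sharper sum--product input), the two regimes being balanced at the endpoint $57/56$.
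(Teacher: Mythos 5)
The first thing to note is that the paper contains no proof of this lemma: it is imported wholesale, introduced as ``the following combination of two results of Garaev and Shen [Theorems~1 and~2]''. So you are attempting to reprove a cited theorem from scratch, and the attempt has to be judged on its own completeness. Your opening reduction is correct and standard: with $r(x)$ the representation function, Cauchy--Schwarz gives $(\#\cA)^4 \le \#\(\cA(\cA+1)\)\cdot E$, where $E$ is the multiplicative energy between $\cA$ and $\cA+1$, so a bound $E \ll (\#\cA)^{3-1/56+o(1)}$ would indeed suffice. But everything after that line is a plausibility narrative, and the gap sits exactly where the content of the theorem lives. You never establish the energy bound; you assert that a Balog--Szemer\'edi--Gowers extraction followed by Pl\"unnecke--Ruzsa and a sum--product input ``yields the exponent $1/56$''. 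In the power-saving regime (energy of size $(\#\cA)^{3-c}$ rather than $(\#\cA)^{3-o(1)}$), BSG loses polynomial factors both in the size of the extracted subsets and in their multiplicative doubling, and whether the sum--product saving survives those losses --- let alone lands on the specific exponent $57/56$ --- is precisely the computation you have omitted. Reducing the lemma to an unproven, strictly stronger energy statement and then appealing to ``tracking the numerology'' is not a proof; it is a restatement of the difficulty.

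A secondary but concrete error: your guess about how the two Garaev--Shen theorems divide the labour --- one for moderately large energy, one for very large energy --- is wrong. They are split by the size of $\cA$ relative to $p$: Theorem~1 covers $\#\cA \le p^{1/2}$ and Theorem~2 the range $p^{1/2} < \#\cA \le p^{1-\varepsilon}$. This is visible in the paper's own remark that Jones and Roche-Newton improved Theorem~1 precisely ``for sets of size $\#\cA \le p^{1/2}$'', and that Bourgain's earlier work already covers $p^{\varepsilon} \le \#\cA \le p^{1-\varepsilon}$. The small-set case in Garaev--Shen is handled by a direct solution count using sum--product/incidence machinery rather than a BSG extraction. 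If you want to make your route rigorous, the honest path is either to quote the Garaev--Shen theorems as the paper does, or to carry out the quantitative BSG-plus-sum-product argument in full and accept whatever (likely much weaker) exponent it actually delivers.
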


We remark that for sets f size 
$ \# \cA\le p^{1/2}$ 
of Jones  and Roche-Newton~\cite{JonRoch}, 
have improved~\cite[Theorem~1]{GarShen}, however this 
is not essential for our final estimate. 
Furthermore, for sets with $ p^{\varepsilon} \# \cA\le p^{1-\varepsilon}$ 
the estimate of Lemma~\ref{lem:A(A+1)} is also given by Bourgain~\cite{Bour1}.

\section{Preliminary Estimates}
\label{sec:Prelim}

Let $\cG_d\subseteq \F_q$  be the group of $d$th powers. 
We start with the following generalisation of the bound~\eqref{eq:low Q},
which closely follows the arguments of~\cite{DaSa,Sark}. 

\begin{lem}
\label{lem:Gd low} 
Let $d\mid q-1$. Then  for
any nontrivial decomposition of $\cG_d$ into some sets $\cA$ and $\cB$, we have
$$
\min\{\#\cA, \# \cB\} \ge  \(2  + o(1)\) \frac{q^{1/2} \log d}{d^2\log q} 
$$
as $q\to \infty$. 
\end{lem}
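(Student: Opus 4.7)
The plan is to adapt the character-sum argument of Sárközy~\cite{Sark} and Dartyge–Sárközy~\cite{DaSa} to the subgroup $\cG_d$. Fix a multiplicative character $\chi$ of $\F_q^*$ of order exactly $d$, so that for every $y \in \F_q^*$,
$$
\mathbf{1}_{\cG_d}(y) = \frac{1}{d}\sum_{j=0}^{d-1}\chi^j(y).
$$
Set $A = \#\cA$, $B = \#\cB$, and assume without loss of generality that $B = \min\{A,B\}$. Since $\cA + \cB = \cG_d$ gives $\cA \subseteq \cG_d - b$ for each $b \in \cB$, we have
$$
A \le \Bigl|\bigcap_{b \in \cB}(\cG_d - b)\Bigr|.
$$

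I would next expand the product of indicators via the character formula and swap summation order to obtain
$$
\Bigl|\bigcap_{b \in \cB}(\cG_d - b)\Bigr| = \frac{1}{d^B}\sum_{\vec j \in \{0,\ldots,d-1\}^B}\sum_{x \in \F_q}\chi\Bigl(\prod_{b \in \cB}(x+b)^{j_b}\Bigr) + O(B).
$$
The term $\vec j = \vec 0$ contributes $q/d^B$. For each $\vec j \ne \vec 0$, the polynomial $\prod_b (x+b)^{j_b}$ has $k(\vec j) := \#\{b : j_b \ne 0\}$ distinct linear factors and is not a perfect $d$th power, so Lemma~\ref{lem:Weil} bounds the inner character sum by $(k(\vec j) - 1) q^{1/2}$. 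A routine combinatorial identity gives $\sum_{\vec j \ne \vec 0}(k(\vec j) - 1) = (d-1)B\, d^{B-1} - d^B + 1$, whence
$$
A \le \frac{q}{d^B} + \frac{(d-1)B - d}{d}\, q^{1/2} + O(q^{1/2}).
$$

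Since $\cA + \cB = \cG_d$ also gives $AB \ge \#\cG_d = (q-1)/d$, we obtain $A \ge (q-1)/(dB)$, which combined with the upper bound yields the constraint
$$
\frac{q-1}{dB} \le \frac{q}{d^B} + \frac{(d-1)B\, q^{1/2}}{d} + O(q^{1/2}).
$$
The right-hand side, as a function of $B$, attains its minimum near the critical value $B \sim (\log q)/(2 \log d)$ where the two main terms become comparable. A careful asymptotic analysis of this constraint around this balance point, tracking the precise constants as $q \to \infty$, extracts the claimed lower bound
$$
B \ge (2 + o(1))\, \frac{q^{1/2}\log d}{d^2 \log q}.
$$

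The main obstacle is the last step: producing the exact leading constant $(2 + o(1))$ and the factor $d^2$ in the denominator requires delicate bookkeeping in the transition regime, whereas the Weil-based intersection bound immediately furnishes only the broad shape of the estimate.
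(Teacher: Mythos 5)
Your reduction --- bounding $\#\cA$ by the cardinality of $\bigcap_{b\in\cB}(\cG_d-b)$, expanding the indicator of $\cG_d$ in multiplicative characters and applying the Weil bound --- is the right general mechanism, and it is what the paper does for the upper bound on $\max\{\#\cA,\#\cB\}$. But intersecting over \emph{all} of $\cB$ breaks the argument. With $B=\#\cB$ shifts the Weil error term is about $\frac{(d-1)B}{d}q^{1/2}$, which grows linearly in $B$; at the target threshold $B\asymp q^{1/2}\log d/(d^2\log q)$ this error is of order $q\log d/(d^2\log q)$, comparable for small $d$ to the trivial bound $\#\cA\le\#\cG_d=(q-1)/d$, so the upper bound on $\#\cA$ is vacuous exactly where you need it. Concretely, your final constraint $\frac{q-1}{dB}\le \frac{q}{d^B}+\frac{(d-1)B}{d}q^{1/2}+O(q^{1/2})$ is satisfied by every $B\gg q^{1/4}/d^{1/2}$ (the error term alone dominates the left side), and for $d=2$, $B=2$ it is satisfied as well because the two main terms $q/(dB)$ and $q/d^{B}$ coincide exactly. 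So no amount of careful bookkeeping in the "transition regime" can extract the claimed bound from this constraint: for small $d$ it yields at best $\#\cB\gg q^{1/4}/d^{1/2}$, which is weaker than $q^{1/2}\log d/(d^2\log q)$, and it does not even exclude $\#\cB=2$ when $d=2$. The final step you defer is not delicate; it is false as stated.

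The missing idea is a truncation: intersect over only a subset $\cB^*\subseteq\cB$ of carefully chosen cardinality $L^*=\rf{\log(q^{1/2}/d)/\log d}$, which balances the main term $(q-1)/d^{L^*+1}\le q^{1/2}$ against the accumulated Weil error $\approx L^*q^{1/2}$, giving $\#\cA\ll \frac{q^{1/2}\log q}{\log d}$ with the explicit constant; combining with $\#\cA\,\#\cB\ge\#\cG_d=(q-1)/d$ then produces the stated factor $(2+o(1))$ and the power $d^2$. To carry this out one must first know that $\cB$ \emph{has} $L^*$ elements, and this requires a separate preliminary argument, which is the other half of the paper's proof: since every $u\in\cG_d$ satisfies $u-b\in\cG_d$ for some $b\in\cB$, the sum $W=\sum_{x\in\F_q^*}\prod_{b\in\cB}\bigl(1-\frac1d\sum_{\chi^d=\chi_0}\chi(x^d-b)\bigr)$ vanishes, while expanding and applying Weil shows $W=(q-1)((d-1)/d)^{\#\cB}+O\bigl(d\,\#\cB\,2^{\#\cB}((d-1)/d)^{\#\cB}q^{1/2}\bigr)$; the vanishing then forces $\#\cB\ge(\frac{1}{\log 2}+o(1))\log(q^{1/2}/d)\ge L^*$. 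Your proposal contains neither the truncation to $\cB^*$ nor this preliminary logarithmic lower bound, and both are essential.
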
  

\begin{proof}  Let $K =\#\cA$ and $L = \# \cB$. 
Assume that $K\ge L$. Maybe an additive shift 
by an element of $b\in \cB$, that is considering $\cA + a$ and $\cB-b$ 
we can assume that $0 \in \cB$ and thus $\cA \subseteq \cG_d$. 

First we show that 
\begin{equation}
\label{eq:small B}
L \ge  \(\frac{1}{\log 2} + o(1)\) \log (q^{1/2}/d). 
\end{equation}
Indeed, for any $u \in \cG_d$  we have 
\begin{equation}
\label{eq:Gd-u Gd}
u - b \in \cA \subseteq \cG_d
\end{equation}
for at least one $b \in \cB$. We now show that if~\eqref{eq:small B}
does not hold then this is impossible. 

Let $\cX_d$ be the set of all multiplicative characters of $\F_q^*$
with $\chi^d = \chi_0$ where $\chi_0$ is the principal character.
We also define $\cX_d^* = \cX_d \setminus \{\chi_0\}$.
We note that $v \in \cG_d$ if and only if
$$
\frac{1}{d}\sum_{\chi \in \cX_d} \chi(v) = \left\{\begin{array}{ll}
1,& \text{if $v \in \cG_d$,}\\
0,&\text{otherwise}.
\end{array}
\right.$$
Therefore, the condition~\eqref{eq:Gd-u Gd} implies
$$
\prod_{b\in \cB} \(1 - \frac{1}{d}\sum_{\chi \in \cX_d} \chi(u - b)\)=0.
$$
Since elements of $\cG_d$ are all of the form $x^d$, $x \in \F_q^*$, 
we see that the sum 
$$
W =\sum_{x \in \F_q^*} \prod_{b\in \cB} \(1 - \frac{1}{d}\sum_{\chi \in \cX_d} \chi(x^d - b)\)
$$
vanishes.

On the other hand, 
separating the contribution of the principal characters, we write
\begin{equation}
\begin{split}
\label{eq:W exp}
W & =\sum_{x \in \F_q^*} \prod_{b\in \cB} \(\frac{d-1}{d} - \frac{1}{d}\sum_{\chi \in \cX_d^*} \chi(x^d - b)\)\\
& = (q-1)\(\frac{d-1}{d}\)^{L} + R,
 \end{split}
\end{equation}
where, after the change of order of summation we obtain
\begin{equation}
\begin{split}
\label{eq:R exp}
R & =  \frac{1}{d^L} \sum_{\substack{\cC \subseteq \cB\\\cC \ne \emptyset}}
 \(-1 \)^{\# \cC} \(d -1 \)^{L-\# \cC}  \sum_{x \in \F_q^*} \prod_{c \in \cC} \sum_{\chi \in \cX_d^*} \chi(x^d - c)\\
& =  \frac{1}{d^L}
 \sum_{\ell=1}^{L} \(-1 \)^{\ell} \(d -1 \)^{L-\ell}  \sum_{\substack{\cC \subseteq \cB\\ \# \cC =\ell}}
  \sum_{x \in \F_q^*} \prod_{c \in \cC} \sum_{\chi \in \cX_d^*} \chi(x^d - c) .
 \end{split}
\end{equation}
For every set $\cC= \{c_1, \ldots, c_\ell\}$ of cardinality $\ell$, we have 
\begin{equation}
\label{eq:C exp}
\sum_{x \in \F_q^*} \prod_{c \in \cC} \sum_{\chi \in \cX_d^*} \chi(x^d - c)
 =  \sum_{\chi_1, \ldots, \chi_\ell \in \cX_d^*}\sum_{x \in \F_q^*} \prod_{i=1}^\ell  \chi_i(x^d - c_i).
\end{equation}
Clearly Lemma~\ref{lem:Weil}  applies to the inner sum and yields
$$
\left| \sum_{x \in \F_q^*} \prod_{i=1}^\ell  \chi_i(x^d - c_i)\right| <  d \ell q^{1/2}.
$$
Hence
$$
\left|\sum_{\chi_1, \ldots, \chi_\ell \in \cX_d^*}\sum_{x \in \F_q^*} \prod_{i=1}^\ell  \chi_i(x^d - c_i)\right| < d (d-1)^\ell \ell q^{1/2}.
$$
Thus, substituting this bound in~\eqref{eq:C exp}, and then the resulting estimate 
in~\eqref{eq:R exp}, we obtain
$$
R   < d q^{1/2}\(\frac{d-1}{d}\)^{L} \sum_{\ell=1}^{L} \binom{L}{\ell}  \ell=  
dL  2^{L-1} \(\frac{d-1}{d}\)^{L} q^{1/2}.
$$
Recalling~\eqref{eq:W exp}, we derive
$$
\left|W -(q-1)\(\frac{d-1}{d}\)^{L} \right| < dL  2^{L-1} \(\frac{d-1}{d}\)^{L} q^{1/2}.
$$
Therefore, if $W = 0$ then 
$q-1 <  dL  2^{L-1} q^{1/2}$
and~\eqref{eq:small B} follows. 

For $d=2$ the result follows from a direct generalisation of~\eqref{eq:low Q}
to arbitrary finite fields.

We now assume that $d \ge 3$, as otherwise there is nothing to prove and set  
$$
L^* = \rf{ \frac{\log (q^{1/2}/d)}{\log d} }.
$$ 
Next, we 
choose an arbitrary subset $\cB^* \subseteq \cB$ of cardinality $L^*$ (which by~\eqref{eq:small B} 
is possible for a sufficiently large $p$).   

We denote by  $N$ is the number of $u \in \cG_d$ satisfying~\eqref{eq:Gd-u Gd} for every $b \in \cB^*$.
Clearly $N \ge K$. On the other hand, as before, we write
\begin{equation*}
\begin{split}
N & = \sum_{u \in \cG_d} \prod_{b\in \cB^*}\(\frac{1}{d}\sum_{\chi \in \cX_d} \chi(u - b)\)
= \frac{1}{d} \sum_{x \in \F_q^*} \prod_{b\in \cB^*}\(\frac{1}{d}\sum_{\chi \in \cX_d} \chi(x^d - b)\)\\
& =  \frac{1}{d^{L^*+1}} \sum_{x \in \F_q^*} \prod_{b\in \cB^*} \sum_{\chi \in \cX_d} \chi(x^d - b).
\end{split}
\end{equation*}
Exactly the same argument  as in the previous estimation of $W$ implies
\begin{equation*}
\begin{split}
\left|N -  \frac{(q-1)}{d^{L^*+1}}\right|
& \le   \frac{1}{d^{L+1}} \left|\sum_{x \in \F_q^*} \prod_{b\in \cB^*}  
\sum_{\chi \in \cX_d^*} \chi(x^d - b)\right|\\
& = \frac{dq^{1/2}}{d^{L^*+1}}\sum_{\substack{\cC \subseteq \cB^*\\\cC \ne \emptyset}} 
(d-1)^{\#\cC} \#\cC
= \frac{q^{1/2}}{d^{L^*}} \sum_{\ell=1}^{L^*} \binom{L^*}{\ell} (d-1)^{\ell} \ell   \\ 
& = \frac{q^{1/2}}{d^{L^*}} L^*d^{L^*-1}(d-1) <  L^* q^{1/2} .
\end{split}
\end{equation*}
Recalling the choice of $L^*$ we immediately derive
\begin{equation}
\label{eq:up G prelim}
K \le N \le \( \frac{1}{2} + o(1)\)  \frac{  d q^{1/2} \log q}{\log d}.
\end{equation}
Since we obviously have $KL \ge \#\cG_e = (q-1)/d$ the result follows. 
\end{proof}

We now use Lemma~\ref{lem:A(A+1)} to study nontrivial 
additive decompositions of small subgroups.

\begin{lem}
\label{lem:Small Group} For any $\varepsilon>0$ there exists some $\kappa>0$ 
such that if for a prime $q=p$ and
a   subgroup   $\cG \subseteq\F_p^*$ of order  $\# \cG < p^{1-\varepsilon}$
there is a  nontrivial decomposition into some sets $\cA$ and $\cB$, then 
$$
\(\#\cG\)^{\kappa}\le  \min\{\#\cA, \# \cB\} \le  \max\{\#\cA, \# \cB\} 
 \ll \(\#\cG\)^{1-\kappa}.
$$
\end{lem}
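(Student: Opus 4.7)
The strategy is to apply Lemma~\ref{lem:A(A+1)} after a suitable multiplicative rescaling of $\cA$. Observe first that since $\#\cA\cdot\#\cB\ge\#(\cA+\cB)=\#\cG$, the lower bound on $\min\{\#\cA,\#\cB\}$ follows automatically from the upper bound on $\max\{\#\cA,\#\cB\}$; I therefore focus on proving the latter.

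Normalisation: replacing $(\cA,\cB)$ by $(\cA+b_0,\cB-b_0)$ for some $b_0\in\cB$ preserves the decomposition $\cA+\cB=\cG$, so we may assume $0\in\cB$, whence $\cA\subseteq\cA+\cB=\cG$. Pick any $b\in\cB$ with $b\ne 0$ (possible since $\#\cB\ge 2$) and set $\cA'=b^{-1}\cA$, so that $\#\cA'=\#\cA$. The key observation is a coset trick: since $\cA\subseteq\cG$, one has $\cA'\subseteq b^{-1}\cG$, and since $\cA+b\subseteq\cA+\cB=\cG$, one also has $\cA'+1=b^{-1}(\cA+b)\subseteq b^{-1}\cG$. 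Both $\cA'$ and $\cA'+1$ therefore lie in the single coset $b^{-1}\cG$, and using $\cG\cdot\cG=\cG$,
$$
\cA'(\cA'+1)\subseteq(b^{-1}\cG)\cdot(b^{-1}\cG)=b^{-2}\cG,
$$
so $\#(\cA'(\cA'+1))\le\#\cG$. Crucially, this argument does not require $b\in\cG$.

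The hypothesis $\#\cG<p^{1-\varepsilon}$ gives $\#\cA'=\#\cA\le p^{1-\varepsilon}$, so Lemma~\ref{lem:A(A+1)} applies to $\cA'$ and yields $(\#\cA)^{57/56+o(1)}\ll\#\cG$, that is, $\#\cA\ll(\#\cG)^{56/57+o(1)}$. A symmetric argument, using instead the shift that places $0$ in $\cA$ and rescaling by a nonzero element of $\cA$, gives the analogous bound for $\#\cB$. Hence, for any fixed $\kappa<1/57$ and all sufficiently large $\#\cG$, we obtain $\max\{\#\cA,\#\cB\}\le(\#\cG)^{1-\kappa}$; the lower bound $\min\{\#\cA,\#\cB\}\ge(\#\cG)^{\kappa}$ then follows from $\#\cA\cdot\#\cB\ge\#\cG$. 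The only real subtlety is the coset observation; once it is in place, Lemma~\ref{lem:A(A+1)} does essentially all the work.
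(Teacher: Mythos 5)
Your proof is correct and follows essentially the same route as the paper: normalise so that $0\in\cB$ and $\cA\subseteq\cG$, pick $b\in\cB\setminus\{0\}$, and apply Lemma~\ref{lem:A(A+1)} to $b^{-1}\cA$ — your coset observation is exactly the paper's identity $\cA(\cA+b)=b^2\,(b^{-1}\cA)(b^{-1}\cA+1)$ combined with $\cA(\cA+b)\subseteq\cG\cdot\cG=\cG$. The only cosmetic difference is that you run a symmetric argument for $\cB$, whereas the paper simply assumes $\#\cA\ge\#\cB$ and deduces the lower bound from $\#\cA\,\#\cB\ge\#\cG$.
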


\begin{proof}    Assume that $\# \cA \ge \# \cB$. Also, 
as in the proof of  Lemma~\ref{lem:Gd low}, 
we see that we can assume that $\#\cA \subseteq \cG$. 

Since $\#\cB \ge 2$, there is $b \in \cB$ with $b \ne 0$.
Then, from the identity
$$
\cA(\cA+b) = b^2 (b^{-1}\cA)(b^{-1}\cA+1)
$$
and Lemma~\ref{lem:A(A+1)} we see that $\#\(\cA(\cA+b)\) \ge (\# \cA)^{1+\delta}$
for some $\delta > 0$ that depends only on $\varepsilon$. 
On the other hand, we obviously have $\cA(\cA+b) \subseteq \cG$, 
thus $\# \cA \le (\# \cG)^{1/(1 +\delta)}$. Since $\#\cA \# \cB\ge \# \cG$,
the result now follows. 
\end{proof}

\section{Decompositions of Large Multiplicative Subgroups and the Set  of Primitive Roots}

Clearly the bound~\eqref{eq:up G prelim} is of the same order of magnitude 
as~\eqref{eq:up Q}. Here we use Lemma~\ref{lem:Kar} to generalise and 
improve~\eqref{eq:up Q} and~\eqref{eq:up R}.

\begin{thm}
\label{thm:Large Group} For any $\varepsilon > 0$, 
if for a   subgroups   $\cG \subseteq\F_q^*$ of order  
$\# \cG \ge q^{3/4 +\varepsilon}$
or the set of primitive roots $\cR\subseteq\F_q^*$
there is a  nontrivial decomposition into some sets $\cA$ and $\cB$, then 
$$
 \max\{\#\cA, \# \cB\} \ll q^{1/2}.
$$
\end{thm}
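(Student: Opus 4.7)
The strategy is to combine Karatsuba's double character-sum estimate (Lemma~\ref{lem:Kar}) with the identity obtained from expanding the indicator of $\cG$ (respectively $\cR$) in multiplicative characters. Relabel so that $K = \#\cA \ge L = \#\cB \ge 2$.

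I would begin with the character-sum identity. For the subgroup case, set $d = (q-1)/\#\cG$ so that $\cG$ coincides with the group of $d$-th powers, and the orthogonality relation reads $\mathbf{1}_\cG(v) = d^{-1}\sum_{\chi^d=\chi_0}\chi(v)$ for $v \ne 0$. Since every $a+b$ lies in $\cG$ (hence is nonzero), summing this over $(a,b) \in \cA \times \cB$ and isolating the principal character yields
\[
KL(d-1) \;=\; \sum_{\chi \ne \chi_0,\ \chi^d = \chi_0} T(\chi), \qquad T(\chi) := \sum_{a\in\cA}\sum_{b\in\cB}\chi(a+b),
\]
so that in particular $KL \le \max_{\chi \ne \chi_0}|T(\chi)|$. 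The primitive-roots case runs in parallel: replace $\mathbf{1}_\cG$ by the M\"obius-sieve expansion
\[
\mathbf{1}_\cR(v) \;=\; \frac{\varphi(p-1)}{p-1}\sum_{e\mid p-1}\frac{\mu(e)}{\varphi(e)}\sum_{\ord \chi = e}\chi(v),
\]
where the outer sum over $e$ introduces only the benign factor $\tau(p-1) = p^{o(1)}$, harmless because $\#\cR = \varphi(p-1) \gg p^{1-o(1)}$.

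Next I would insert Lemma~\ref{lem:Kar} into the bound $KL \le \max|T(\chi)|$. Rearranging the resulting inequality, this reads, for any integer $\nu \ge 1$ (with constants depending on $\nu$),
\[
K \;\ll\; \frac{q}{L^{\nu}} \;+\; q^{1/2}.
\]
Separately, $\cA+\cB = \cG$ gives trivially $KL \ge \#\cG \ge q^{3/4+\varepsilon}$. Taking $\nu = 2$ and comparing the two estimates produces a dichotomy: either $L \gg q^{1/4+\varepsilon}$, in which case $q/L^2 \ll q^{1/2-2\varepsilon}$ and the desired $K \ll q^{1/2}$ follows at once; or $L \ll q^{1/4-\varepsilon}$, in which case $KL \ge \#\cG$ forces $K \gg q^{1/2+2\varepsilon}$. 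Feeding this sharper lower bound on $K$ back into the Karatsuba inequality (possibly with a larger $\nu = \nu(\varepsilon)$) then tightens the upper bound on $L$, and iterating this bootstrap an $\varepsilon$-dependent but bounded number of times should close the argument.

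The main obstacle is precisely the ``thin~$\cB$'' regime. When $L$ is close to the minimum value $2$, Karatsuba alone gives only $K \ll q/L^\nu$, which is nearly useless, and the constants in Lemma~\ref{lem:Kar} depending on $\nu$ forbid letting $\nu$ grow with $q$. The delicate step is therefore to quantify how far the iterative bootstrap can be driven before the constants blow up; should that be insufficient, one would supplement it with a direct estimate of $\#\cA \le \#\bigcap_{b\in\cB}(\cG-b)$ via additive-character (Gauss-sum) bounds, in the spirit of---but, necessarily, not identical to---Lemma~\ref{lem:ShkVyu}, since the latter does not apply in the range $\#\cG \ge q^{3/4+\varepsilon}$ relevant here.
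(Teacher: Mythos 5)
Your setup is the same as the paper's: reduce to $\cA+\cB\subseteq\cG_d$, note that the double character sum over $\cA\times\cB$ equals $\#\cA\,\#\cB$ (up to a harmless factor), and compare with Lemma~\ref{lem:Kar}, which indeed rearranges to $\#\cA\ll q(\#\cB)^{-\nu}+q^{1/2}$. The genuine gap is exactly the ``thin $\cB$'' regime you flag at the end: your bootstrap does not close. Writing $K=\#\cA$, $L=\#\cB$, the only two inputs you have are $K\ll qL^{-\nu}+q^{1/2}$ and $KL\ge q^{3/4+\varepsilon}$, and this pair admits a consistent fixed point $L\asymp q^{\alpha}$, $K\asymp q^{3/4+\varepsilon-\alpha}$ with $\alpha=(1/4-\varepsilon)/(\nu-1)$: at that point $1-\nu\alpha=3/4+\varepsilon-\alpha$, so each iteration reproduces the same bounds and no contradiction with $K\gg q^{1/2}$ ever appears, for any fixed $\nu$. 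Since the constants in Lemma~\ref{lem:Kar} depend on $\nu$, you also cannot let $\nu\to\infty$ with $q$. So the iteration, however long you run it, leaves $K$ as large as $q^{3/4+\varepsilon-o(1)}$.

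What the paper does instead is supply an independent lower bound on $\min\{K,L\}$ \emph{before} invoking Karatsuba. Since $d=(q-1)/\#\cG\ll q^{1/4-\varepsilon}$, Lemma~\ref{lem:Gd low} (the Weil-bound argument: an upper bound $K\ll dq^{1/2}\log q/\log d$ obtained from $\cA\subseteq\bigcap_{b\in\cB^*}(\cG_d-b)$ for a small subset $\cB^*\subseteq\cB$, combined with $KL\ge\#\cG_d$) gives $\min\{K,L\}\ge q^{\varepsilon}$. With $L\ge q^{\varepsilon}$ in hand, a \emph{single} application of Lemma~\ref{lem:Kar} with the fixed choice $\nu=\lceil\varepsilon^{-1}\rceil$ makes the first term $qL^{-\nu}\le q^{1/2}$ and yields $K\ll q^{1/2}$ at once --- no iteration. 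Your fallback suggestion (bounding $\#\cA\le\#\bigcap_{b\in\cB}(\cG-b)$) is in fact the right missing ingredient, but it must be used to lower-bound $L$ and then fed back into Karatsuba, not to bound $\#\cA$ by $q^{1/2}$ directly; as stated it is not carried out. Two smaller points: the paper handles $\cR$ by dilating by a fixed quadratic non-residue so that $\xi\cdot\cA+\xi\cdot\cB\subseteq\cQ$ (the argument only needs $\cA+\cB$ contained in a coset-free subgroup), which avoids the $\tau(p-1)=q^{o(1)}$ loss your M\"obius expansion would introduce and which would degrade the conclusion to $\ll q^{1/2+o(1)}$; and the paper's version of the identity uses one fixed non-principal $\chi\in\cX_d^*$, for which the sum is exactly $KL$, rather than averaging over all of them.
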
 

\begin{proof}  Clearly, if $\cR = \cA+\cB$ is a decomposition of the 
set of primitive roots, then multiplying each elements of $\cA$ and $\cB$ by 
a fixed quadratic non-residue $\xi$ we obtain 
$$
\xi\cdot \cA + \xi \cdot \cB \subseteq \cQ.
$$

Let $d = (q-1)/\# \cG$, thus $\cG = \cG_d$ in the notation of 
Section~\ref{sec:Prelim}.

We also remark that  $d \ll q^{1/4 - \varepsilon}$, so by Lemma~\ref{lem:Gd low} 
we have 
\begin{equation}
\label{eq:low G}
\min\{\#\cA, \# \cB\} \ge q^{\varepsilon}
\end{equation}
for any nontrivial decomposition of $\cG_d$. 
Furthermore, one can check that the bound~\eqref{eq:low R} can be extended to arbitrary 
finite fields, so~\eqref{eq:low G} also holds 
for the sets in any nontrivial decomposition of $\cR$.

Thus, it is enough to show that  
any  sets $\cA$ and $\cB$ with~\eqref{eq:low G}
such that $\cA + \cB \subseteq \cG_d$ we have
\begin{equation}
\label{eq:up G}
\max\{\#\cA, \# \cB\} \ll q^{1/2}.
\end{equation}

Assume that $\#\cA \ge \# \cB$. 

Now, let $\chi\in \cX_d^*$ be any non-principal   character of $\F_q^*$.
If $\cA + \cB \subseteq \cG_d$ then
we have 
$$
\sum_{a\in \cA} \sum_{b \in \cB} \chi(a+b) =   \#\cA \# \cB.
$$
Comparing this with the bound of Lemma~\ref{lem:Kar},
we derive
\begin{equation}
\label{eq:Sum Low}
 \#\cA \# \cB \ll 
\( \# \cA\)^{(2\nu-1)/2\nu}\( \(\# \cB\)^{1/2} q^{1/2\nu } + \# \cB q^{1/4\nu}\) .
\end{equation}
Taking $\nu = \rf{\varepsilon^{-1}}$, we see that the condition~\eqref{eq:low G}
implies 
$$
\(\# \cB\)^{1/2} q^{1/2\nu } \le \# \cB q^{1/4\nu}.
$$
Hence~\eqref{eq:Sum Low} can now be re-written as
$$
 \#\cA \# \cB \ll \( \# \cA\)^{(2\nu-1)/2\nu}  \# \cB q^{1/4\nu},
$$
which implies~\eqref{eq:up G}, and  concludes the proof.
\end{proof}

Obviously, if $\cG = \cA+\cB$ then $\#\cG  \le   \#\cA \# \cB$.
Hence Theorem~\ref{thm:Large Group} implies that any  
nontrivial decomposition of a subgroup $\cG \subseteq\F_p^*$ 
of order  $\# \cG \ge q^{3/4 +\varepsilon}$
 into some sets $\cA$ and $\cB$, we have
$$
\min\{\#\cA, \# \cB\} \gg \#\cG q^{-1/2} 
$$
that is stronger than Lemma~\ref{lem:Gd low} 
and for $\cG = \cQ$ improves the bound~\eqref{eq:up Q} of 
S{\'a}rk{\"o}zy~\cite{Sark}.

Similarly,  any  
nontrivial decomposition of $\cR$ into some sets $\cA$ and $\cB$, we have
$$
\min\{\#\cA, \# \cB\} \gg \# \cR q^{-1/2} = \varphi(q-1) q^{-1/2},
$$
that improves the bound~\eqref{eq:up R} of  
Dartyge and S{\'a}rk{\"o}zy~\cite{DaSa}.

\section{Decompositions of Small Multiplicative Subgroups}

We now use Lemma~\ref{lem:A(A+1)} to study nontrivial 
additive decompositions of small subgroups of prime fields
that is, subgroups $\cG \subseteq\F_p^*$  with a prime $p$ 
of cardinality  $\#\cG$ to which the bound 
of Theorem~\ref{thm:Large Group} is either weak or does not 
apply (for example for subgroups of order  $\# \cG < p^{3/4}$).

\begin{thm}
\label{thm:Small Group} Let $q=p$ be prime.  
If for a   subgroup  $\cG \subseteq\F_p^*$ 
there is a  nontrivial decomposition into some sets $\cA$ and $\cB$, then 
$$
\(\#\cG\)^{1/2 + o(1)}= \min\{\#\cA, \# \cB\} \le  \max\{\#\cA, \# \cB\} 
= \(\#\cG\)^{1/2 + o(1)}, 
$$
as $\#\cG \to \infty$.
\end{thm}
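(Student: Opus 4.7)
The plan is to reduce the whole statement to an upper bound on the larger of $\#\cA$ and $\#\cB$, and then to split into two cases depending on how $\#\cG$ compares to $p$: for subgroups close to the full field I will invoke Theorem~\ref{thm:Large Group}, and for smaller subgroups I will combine Lemma~\ref{lem:Small Group} (to provide enough shifts) with Lemma~\ref{lem:ShkVyu} on intersections of shifted subgroups.

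Set $K = \#\cA$, $L = \#\cB$, and $G = \#\cG$, and assume $K \ge L$. Since the sumset has at most $KL$ elements, $G \le KL \le K^2$, so $K \ge G^{1/2}$ is automatic; furthermore, if I can show $K \le G^{1/2+o(1)}$, then $L \ge G/K \ge G^{1/2 - o(1)}$ together with $L \le K$ forces $L = G^{1/2+o(1)}$ as well. Translating both sets by an element $b_0 \in \cB$, I may assume $0 \in \cB$, and then $\cA = \cA + 0 \subseteq \cA + \cB = \cG$. Hence it suffices to show that, for every fixed $\eta > 0$ and every sufficiently large $G$, one has $K \le G^{1/2 + \eta}$.

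Fix $\eta \in (0, 1/6)$. If $G \ge p^{1-\eta}$, then in particular $G \ge p^{3/4 + \eta/2}$, so Theorem~\ref{thm:Large Group} applies and yields $K \ll p^{1/2} \le G^{1/(2(1-\eta))} \le G^{1/2 + \eta}$ for $\eta$ small. Otherwise $G < p^{1-\eta}$, and Lemma~\ref{lem:Small Group} (with $\varepsilon = \eta$) produces some $\kappa = \kappa(\eta) > 0$ with $L \ge G^{\kappa}$, so in particular $L \to \infty$. Because $\cA + b \subseteq \cG$ for each $b \in \cB$, one has $\cA \subseteq \bigcap_{b \in \cB \setminus \{0\}}(\cG - b)$. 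Choose a fixed integer $m = m(\eta) \ge 2$ satisfying $1/(2(2m-1)) \le \eta/2$ and $(1-\eta)(1 + 1/(2m-1)) < 1$. For $G$ large, $L - 1 \ge G^{\kappa} - 1 \ge m$, so one may select $m$ distinct nonzero elements $b_1, \dots, b_m \in \cB$; the second inequality on $m$ combined with $G < p^{1-\eta}$ makes the hypothesis $p \ge 4(m-1)G(G^{1/(2m-1)}+1)$ of Lemma~\ref{lem:ShkVyu} valid for large $G$. That lemma then gives
$$
K \;\le\; \#\!\left(\bigcap_{i=1}^{m}(\cG - b_i)\right) \;\le\; 4m\bigl(G^{1/(2m-1)}+1\bigr)^m \;\le\; 4m\cdot 2^m\, G^{1/2 + 1/(2(2m-1))}.
$$
With $m$ fixed, $4m\cdot 2^m$ is a constant absorbed into $G^{o(1)}$, and by the choice of $m$ the resulting bound is $K \le G^{1/2 + \eta/2 + o(1)} \le G^{1/2+\eta}$ for large $G$.

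Combining the two cases gives $K \le G^{1/2+\eta}$ for every $\eta > 0$, hence $K = G^{1/2+o(1)}$, which by the initial reduction proves the theorem. The main obstacle is the tension in Lemma~\ref{lem:ShkVyu}: its hypothesis forces $G$ to be bounded away from $p$, while its conclusion only tends to $G^{1/2}$ in the limit $m \to \infty$. One must therefore choose the number of shifts $m$ and the threshold separating the two cases in tandem, using Lemma~\ref{lem:Small Group} to guarantee that $\cB$ actually contains enough distinct nonzero elements to feed into Lemma~\ref{lem:ShkVyu} for the chosen $m$.
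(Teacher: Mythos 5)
Your proposal is correct and follows essentially the same route as the paper: reduce everything to the upper bound $\#\cA \le (\#\cG)^{1/2+\eta}$, handle large subgroups via Theorem~\ref{thm:Large Group}, and handle the remaining range by using Lemma~\ref{lem:Small Group} to guarantee $m$ distinct nonzero shifts in $\cB$ and then applying Lemma~\ref{lem:ShkVyu} to $\cA \subseteq \bigcap_i (\cG - b_i)$. Your version is in fact slightly more careful than the paper's, since you make explicit the constraint linking $m$, $\eta$, and the threshold on $\#\cG$ needed for the hypothesis of Lemma~\ref{lem:ShkVyu} to hold.
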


\begin{proof}    Assume that $\# \cA \ge \# \cB$. 

Since $\#\cA \# \cB\ge \# \cG$ it suffices to only  establish 
the upper bound. In particular, it is enough to show that for an 
arbitrary  $\eta >0$ we
have 
\begin{equation}
\label{eq:A eps}
 \#\cA  \le \(\#\cG\)^{1/2 +\eta} .
\end{equation}

Let us fix some sufficiently small $\eta >0$. In particular, we assume 
that  that $\eta < 1/6$, thus $1/(1+2 \eta) > 3/4$. 
Then for $\#\cG \ge p^{1/(1+2 \eta)}$
the bound~\eqref{eq:A eps} follows from Theorem~\ref{thm:Large Group}.

So we now assume that $\#\cG < p^{1/(1+2 \eta)}$.
Then clearly for any $m\ge 2$ and a sufficiently large $p$  
the condition of Lemma~\ref{lem:ShkVyu} is satisfied. 

By Lemma~\ref{lem:Small Group}, the set $\# \cB$ is large enough, so that it 
has $m$ distinct elements $b_1, \ldots, b_m$.
We now observe that for every $i=1, \ldots, m$ we have 
$\cA \subseteq \cG - b_i$. Thus taking $m$ sufficiently large
we see that  Lemma~\ref{lem:ShkVyu}  implies~\eqref{eq:A eps}. 
Since $\#\cA \# \cB\ge \# \cG$,
the result now follows. 
\end{proof}

Clearly one can choose $m$ as a growing function of $\# \cG$ and
get more explicit bounds in Theorem~\ref{thm:Small Group}.

\section{Comments}

We remarks that it is natural to try to obtain analogues of our results for 
the dual problem of nontrivial multiplicative decompositions of intervals 
in $\F_p$.  That is, one can consider 
representations  $\cI = \cA\cB$ of sets $\cI= \{m+1, \ldots, m+n\} \subseteq \F_p$
of $n$ consecutive residues modulo $p$  with two arbitrary sets $\cA, \cB \subseteq \F_p^*$
such that  $\# \cA \ge \# \cB \ge 2$. Certainly various conjectures, similar to 
those of~\cite{DaSa,Sark}, can be about the non-existence
of such decompositions.  Here we merely show that
some of the above methods apply to multiplicative decompositions too.
For example, by a result of Bourgain~\cite{Bour2}, 
for any two sets $\cA, \cB \subseteq \F_p$ with $\cA, \cB \ne \{0\}$, we have
$$
\#\(8 \cA\cB - 8\cA\cB\) >\frac{1}{2}  
\min\left\{\#\cA \# \cB, p - 1\right\}.
$$
On the other hand, if $   \cA \cB =\cI$ then 
$$\#\(8 \cA\cB - 8\cA\cB\) \le 16 \#\cI.
$$
Thus for $\#\cI < (p-1)/32$  we derive very tight bounds:
$$
\#\cI \le \#\cA \# \cB  \le  32 \#\cI.
$$

Now we note, as in the above we can assume that 
$1 \in \cB$, so $\cA \subseteq \cI$.
Using the orthogonality of the exponential function
$$
\ep(z) = \exp(2 \pi i z/p)
$$
we can write the number of solutions $J$ to the congruence 
$$
u \equiv ab \pmod p, \qquad u \in \cI, \ a\in \cA, \ b \in \cB,
$$
as 
$$
J = \sum_{u \in \cI} \sum_{a\in \cA} \sum_{b \in \cB}
\frac{1}{p} \sum_{-(p-1)/2 \le \lambda \le (p-1)/2}
\ep(\lambda(u - ab)).
$$
Changing the order of summation and separating the contribution 
$ \# \cA \#\cB \# \cI/p$ of terms corresponding to $\lambda=0$,
we obtain
$$
\left| J - \frac{ \# \cA \#\cB \# \cI}{p}\right|\le
\frac{1}{p} \sum_{1 \le |\lambda| \le (p-1)/2}
\left|\sum_{u \in \cI}\ep(\lambda u)\right|
\left| \sum_{a\in \cA} \sum_{b \in \cB}
\ep(\lambda ab)\right|.
$$
Using the classical estimate
$$
\left| \sum_{a\in \cA} \sum_{b \in \cB}
\ep(\lambda ab)\right| \le (p \# \cA \#\cB)^{1/2},
$$
see, for example, see~\cite[Equation~(1.4)]{BouGar}
or~\cite[Lemma~4.1]{Gar2}, together with the bound
$$
\left|\sum_{u \in \cI}\ep(\lambda u)\right| \le \frac{p}{|\lambda|}, 
$$
that holds for $1 \le |\lambda| \le (p-1)/2$
see~\cite[Bound~(8.6)]{IwKow}. We derive
$$
J - \frac{ \# \cA \#\cB \# \cI}{p} \ll 
(p \# \cA \#\cB)^{1/2} \log p.
$$
On the other hand, since $\cI = \cA \cB$ is multiplicative decomposition of $\cI$,
we have $J = \# \cA \#\cB$. Thus for any fixed $\varepsilon > 0$,  
if  $\# \cI < (1-\varepsilon) p$ then 
$$
 \# \cA \#\cB \ll p(\log p)^2.
$$

One can also consider more general questions about sets of the 
form
$$
F(\cA, \cB) = \{F(a,b)~:~a\in \cA, \ b \in \cB\}
$$
with $F(X,Y) = \F_q[X,Y]$, representing subgroups and 
intervals.

Finally, we note that  Gyarmati, Konyagin and S{\'a}rk{\"o}zy~\cite{GyKoSa}
have studied additive decompositions of large subsets (of size 
close to $p$) of prime fields $\F_p$. The Weil bound of 
multiplicative character sums also plays a prominent role in
the arguments of~\cite{GyKoSa}. 
Several more results about decompositions of arbitrary sets can be 
found in~\cite{Alon,AGU}.

\section{Acknowledgment}

The author is grateful to  Cecile Dartyge for a careful reading of 
the manuscript and many valuable comments, 
 to Sergei Konyagin for his suggestion to use
the result of Shkredov and Vyugin~\cite{VyuShk} in the proof of
Theorem~\ref{thm:Small Group} and to Ilya Shkredov for useful
discussions and providing a preliminary version of~\cite{Shkr}. 

This work was supported in part by the  ARC Grant DP130100237.

\end{document}